\documentclass{amsart}
\usepackage{amsfonts,amsmath,amsthm,amssymb}

\newtheorem{theorem}{Theorem}
\newtheorem{corollary}{Corollary}
\newtheorem{lemma}{Lemma}

\newtheorem{remark}{Remark}

\newtheorem{problem}{Problem}

\def\gp#1{\langle #1 \rangle}
\def\m1{^{-1}}
\DeclareMathOperator{\rad}{{\mathfrak J}}

\begin{document}
\title[On filtered multiplicative bases]{On filtered multiplicative bases of some associative algebras}
\author{V.~BOVDI, A.~GRISHKOV, S.~SICILIANO}

\address{
\texttt{VICTOR BOVDI, ALEXANDER GRISHKOV}
\newline
IME, USP, Rua do Matao, 1010 -- Citade Universit\`{a}ria, CEP
05508-090, Sao Paulo, Brazil}
\email{\{vbovdi, shuragri\}@gmail.com}

\address{
\texttt{SALVATORE SICILIANO},
\newline
Dipartimento di Matematica e Fisica ``Ennio De Giorgi",
Universit\`{a} del Salento,
Via Provinciale Lecce--Arnesano, 73100--LECCE, Italy}
\email{salvatore.siciliano@unisalento.it}
\subjclass{Primary: 16S30-17B60}
\keywords{filtered multiplicative basis, restricted enveloping algebra, finite $p$-group}
\thanks{The research was supported by FAPESP (Brazil) Process N: 2014/18318-7,
CNPq(Brazil) and RFFI 13-01-00239a(Russia).}

\maketitle
%\date{}

\begin{abstract}
We deal with the existing problem of filtered multiplicative bases of finite-dimensional associative algebras. For an associative algebra $A$ over a field, we investigate when the property of having a filtered multiplicative basis is hereditated by  homomorphic images or by the associated graded algebra of $A$. These results are then applied to some classes of  group algebras and restricted enveloping algebras.
\end{abstract}

\section{Introduction}

Let $A$ be an associative algebra over a field
$F$ and denote by $\rad(A)$ the Jacobson radical of $A$. An $F$-basis $\mathfrak{B}$ of $A$ is called \emph{multiplicative} if $\mathfrak{B}\cup \{ 0\}$ is a semigroup under the product of $A$. If one also has that  $\mathfrak{B}\cap\rad(A)$ is an $F$-basis of $\rad(A)$, then $\mathfrak{B}$ is said to be a \emph{filtered multiplicative basis} (shortly, f.m.b.) of $A$.
Filtered multiplicative bases   arise in the representation theory
 of associative algebras and were introduced by
H.~Kupisch in \cite{Kupisch}.

In \cite{BGRS},  R.~Bautista, P.~Gabriel, A.~Roiter and
L.~Salmeron proved that if a finite-dimensional associative algebra $A$ has  finite representation type
over an algebraically closed field $F$, then $A$ has an f.m.b. This implies that the number of isomorphism classes of  algebras of finite representation type of a given dimension is finite and reduces the classification of these algebras to a combinatorial problem. In the same paper \cite{BGRS} it was asked when a group algebra has an f.m.b.
and such a problem (not necessarily for group algebras) has been subsequently considered by
several authors: see e.g. \cite{Balogh_1, Balogh_2, Bovdi_1,
Bovdi_2, Calderon, Landrock_Michler, Roiter_Sergeichuk, Zhu_Li}. In particular, it is still an open problem whether
a group algebra $F G$  has  an f.m.b. in the case when  $F$
is a field  of odd characteristic $p$ and $G$ is a nonabelian
$p$-group (see \cite{Oberwolfach}, Question 5).

Moreover, in \cite{Bovdi_Grishkov_Siciliano} the same problem was investigated in the setting of
restricted enveloping algebras $u(L)$, where $L$ is in the class $\mathfrak{F}_p$ of finite-dimensional
and $p$-nilpotent restricted Lie algebras over a field of positive characteristic $p$. In particular, we
characterized commutative restricted enveloping algebra having an  f.m.b., and showed that if $L$ has
nilpotency class 2 and $p>2$ then $u(L)$ does not have any f.m.b.

The aim of the present paper  is to provide some further contribution on the problem of existence of an f.m.b. for an associative algebra.
 First, we deal with the conditions under which the property of having a multiplicative basis is inherited by homomorphic images. This result is then used to establish when a restricted enveloping algebra $u(L)$  has an f.m.b., where  $L\in \mathfrak{F}_2$ has nilpotency class 2 over a field of characteristic 2, thereby complementing the previous results in \cite{Bovdi_Grishkov_Siciliano}. Next, we show that if a finite-dimensional associative algebra $A$ admits an f.m.b., then so does its graded algebra associated  to the filtration given by the powers of the  Jacobson radical. The combination of such a result with \cite{Bovdi_Grishkov_Siciliano}  allows to conclude that if $F$ is a
field  of odd characteristic $p$ and $G$ is a finite $p$-group of nilpotency class 2, then the group algebra $F G$  has no  f.m.b.,
which provides a partial answer to the question 5 in \cite{Oberwolfach}.

In the sequel we will use freely the notation and results from the books \cite{Bovdi_A, Strade_Farnsteiner_book}.

\section{Preliminaries}

Let $A$ be a finite-dimensional associative algebra over a field
$F$ having an f.m.b.  $\mathfrak{bs}(A)$. Then the following
simple properties hold (see \cite{Bovdi_1}):
\begin{itemize}
\item[(F1)] $\mathfrak{bs}(A)\cap \rad^n(A)$ is an $F$-basis of
$\rad^n(A)$ for every $n\geq 1$;
\item[(F2)] if $u,v\in \mathfrak{bs}(A) \backslash \rad^k(A)$ and
$u\equiv v\pmod{\rad^k(A)}$
then $u=v$;
\item[(F3)] if another $F$-algebra $B$ admits an f.m.b. then so does $A\otimes_{F} B$.
\end{itemize}
We denote by $A^-$  the restricted Lie algebra associated to $A$ via the Lie bracket $[x,y]=xy-yx$ for every $x,y\in A$ and $p$-map given by ordinary $p$-exponentiation. For a subset $S$ of $A$ we denote by $\langle S \rangle$ and $\langle S\rangle_{ F}$, respectively, the associative subalgebra and the $ F$-vector subspace spanned by $S$.

Let $L$ be a restricted Lie algebra over a field $F$ of positive
characteristic $p$ with a $p$-map $[p]$. We denote by $\omega(L)$ the
\emph{augmentation ideal} of $u(L)$, that is, the associative
ideal generated by $L$ in $u(L)$.  The restricted ideals of $L$ given by
\[
\mathfrak{D}_m(L)=L\cap \omega^m(L), \qquad \quad (m\geq 1)
\]
are called the \emph{dimension subalgebras} of $L$ (see \cite{Riley_Shalev}).
Similarly to the dimension subgroups (in the context of modular group algebras),  these subalgebras can be  explicitly described as
$\mathfrak{D}_m(L)=\sum_{ip^{j}\geq m}{{\gamma}_{i}(L)}^{[p]^{j}}$,
where ${\gamma}_{i}(L)^{[p]^j}$ is the restricted subalgebra of
$L$ generated by the set of $p^j$th powers of the $i$th term
of the lower central series of $L$.
The center of $L$ will be denoted by $Z(L)$.  For a subset $S$ of $L$ we will denote by $\langle S \rangle_p$
the restricted subalgebra generated by $S$.  A restricted Lie algebra $H$ is said to be \emph{nilcyclic} if $H=\langle x \rangle_p$ for some  $p$-nilpotent element $x$ of $H$.

It is well-known that if $L$ is finite-dimensional and
$p$-nilpotent then $\omega(L)$ is nilpotent (see \cite{Strade_Farnsteiner_book}, Corollary 3.7 of Chapter 1).
Clearly, in this case $\omega(L)$  coincides with  $\rad(u(L))$ and
$u(L)=F \cdot 1\oplus \omega(L)$, so that $u(L)$ is a local basic $F$-algebra. In this case, if $u(L)$ has
an  f.m.b. $\mathfrak{bs}(u(L))$, then we can assume without loss of
generality that $1\in \mathfrak{bs}(u(L))$. For each $x\in L$, the
largest subscript $m$ such that $x \in \mathfrak{D}_m(L)$ is
called the \emph{height} of $x$ and denoted by $\nu(x)$. The
combination of Theorem $2.1$ and Theorem $2.3$ from
\cite{Riley_Shalev} yields the following.

\begin{lemma}\label{L:1}
Let $L\in \mathfrak{F}_p$ be a restricted Lie algebra over a field $F$,
and let $\{x_i \}_{i\in I}$ be an ordered basis  of $L$ chosen such  that
\[
\mathfrak{D}_m(L)= \langle x_i\;  \vert\;  \nu(x_i)\geq m \rangle_F\qquad  (m\geq 1).
\]
Then for each positive integer $n$ the following statements hold:
\begin{itemize}
\item [(i)] $\omega(L)^n=\langle x\; \vert \;\nu(x)\geq n \rangle_F$, where\quad  $x=x^{\alpha_1}_{i_1}\cdots
x^{\alpha_l}_{i_l}$,\newline
$\nu(x)=\sum_{j=1}^l\alpha_j\nu(x_{i_j})$, \quad $i_1<\cdots
<i_l$\quad and\quad  $0\leq \alpha_j \leq p-1$. \item [(ii)] The
set $\{\; y \; \vert\;  \nu(y)=n \;\}$ is an $F$-basis of
$\omega(L)^n$ modulo $\omega(L)^{n+1}$.
\end{itemize}
\end{lemma}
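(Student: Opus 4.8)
The plan is to combine the restricted Poincar\'e--Birkhoff--Witt theorem with the structural description of the associated graded algebra $\mathrm{gr}_\omega u(L)=\bigoplus_{n\geq 0}\omega(L)^n/\omega(L)^{n+1}$ supplied by Riley and Shalev. By the restricted PBW theorem, the standard monomials
\[
x=x_{i_1}^{\alpha_1}\cdots x_{i_l}^{\alpha_l}, \qquad i_1<\cdots<i_l,\quad 0\leq \alpha_j\leq p-1,
\]
together with $1$, form an $F$-basis of $u(L)$. Assigning to each such $x$ the weight $\nu(x)=\sum_{j=1}^l \alpha_j\nu(x_{i_j})$, I would first prove the inclusion $\langle x\mid \nu(x)\geq n\rangle_F\subseteq \omega(L)^n$. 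This is the formal half: by the very choice of the basis, $x_i\in\mathfrak{D}_{\nu(x_i)}(L)=L\cap\omega(L)^{\nu(x_i)}\subseteq \omega(L)^{\nu(x_i)}$, and since $\omega(L)^a\cdot\omega(L)^b\subseteq\omega(L)^{a+b}$ the monomial $x$ lies in $\omega(L)^{\nu(x)}\subseteq\omega(L)^n$ whenever $\nu(x)\geq n$.

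For the reverse inclusion I would argue by a dimension count, and this is where the results of Riley and Shalev enter. Their description of $\mathrm{gr}_\omega u(L)$ identifies it with the restricted enveloping algebra of the graded restricted Lie algebra $\mathrm{gr}(L)=\bigoplus_{m\geq 1}\mathfrak{D}_m(L)/\mathfrak{D}_{m+1}(L)$, in which the images of the $x_i$ form a homogeneous basis with $\bar x_i$ placed in degree $\nu(x_i)$. Applying the restricted PBW theorem to this graded algebra shows that the images of the weight-$n$ standard monomials constitute an $F$-basis of the homogeneous component $\omega(L)^n/\omega(L)^{n+1}$. Consequently
\[
\dim_F\omega(L)^n=\sum_{m\geq n}\dim_F \frac{\omega(L)^m}{\omega(L)^{m+1}}=\#\{\,x\mid \nu(x)\geq n\,\},
\]
the sum being finite because $\omega(L)$ is nilpotent.

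It then remains to assemble the two halves. The set $\{x\mid\nu(x)\geq n\}$ is linearly independent, being a subset of the PBW basis, and by the first paragraph it is contained in $\omega(L)^n$; since its cardinality equals $\dim_F\omega(L)^n$ by the displayed count, it is an $F$-basis of $\omega(L)^n$, which is assertion (i). Assertion (ii) is then immediate, since the monomials with $\nu(x)>n$ already lie in $\omega(L)^{n+1}$, so the weight-$n$ monomials map to a basis of $\omega(L)^n/\omega(L)^{n+1}$. The main obstacle is precisely the reverse inclusion: showing that the weight-$\geq n$ monomials actually exhaust $\omega(L)^n$ rather than merely sitting inside it. This is exactly the content imported from Theorems $2.1$ and $2.3$ of Riley--Shalev, whose identification of the graded algebra furnishes the required dimension of each quotient $\omega(L)^n/\omega(L)^{n+1}$.
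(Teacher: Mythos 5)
Your argument is correct and follows essentially the same route as the paper, which offers no independent proof but simply states that the lemma follows from combining Theorems 2.1 and 2.3 of Riley--Shalev; your reconstruction (restricted PBW for the inclusion $\langle x\mid \nu(x)\geq n\rangle_F\subseteq\omega(L)^n$, plus the identification of the associated graded algebra with $u(\mathrm{gr}(L))$ for the dimension count giving the reverse inclusion) is precisely how those two cited theorems combine to yield the statement.
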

If $S$ is a subset of a $p$-nilpotent restricted Lie algebra then  the minimal positive
integer $n$ such that $z^{[p]^n}=0$ for every $z\in S$ is called the
\emph{exponent} of $S$ and denoted by $e(S)$.

\section{Homomorphic images and restricted enveloping algebras}

Let $A$ be an associative algebra over a field $F$ and let $\mathfrak{bs}(A)$ be an $F$-basis of $A$. A subset $P\subset A$ is called  $\mathfrak{bs}(A)$-\emph{regular} if for every $x\in P$ one has that either $x\in  \mathfrak{bs}(A)$ or $x=a-b$ for some $a,b\in  \mathfrak{bs}(A)$.

\begin{theorem}\label{T:1}
Let $A=\gp{g_1,\dots,g_m}$  be a finitely generated   associative algebra over a field $F$ and let $\mathfrak{bs}(A)=\{a_i\mid i\in I\}$ be a multiplicative basis of $A$ such that  $\{g_1,\dots,g_m\}\subseteq \mathfrak{bs}(A)$. Let $\psi:A\to B$ be a surjective homomorphism of $A$ onto an $F$-algebra $B$ and $H=\{ \psi(g_i)\vert  i=1,\ldots,m\}$. Then the following statements hold:
\begin{itemize}
\item [{\normalfont (i)}] If there exists $J\subseteq I$ such that $\mathfrak{bs}(B)=\{b_i=\psi(a_i)\vert i\in J\}$ is a multiplicative basis of $B$ containing $H$, then $\mathfrak{Ker}(\psi)$ has a  $\mathfrak{bs}(A)$-regular $F$-basis.
\item [{\normalfont (ii)}] If $\mathfrak{Ker}(\psi)$ has a  $\mathfrak{bs}(A)$-regular $F$-basis, then  there exists $J\subseteq I$ such that $\mathfrak{bs}(B)=\{b_i=\psi(a_i)\vert i\in J\}$ is a multiplicative basis of $B$.
\end{itemize}
\end{theorem}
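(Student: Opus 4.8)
The plan is to exploit the fact that, since $A=\gp{g_1,\dots,g_m}$ with $\{g_1,\dots,g_m\}\subseteq\mathfrak{bs}(A)$, \emph{every} basis element is a product of the generators. Indeed, the nonzero products $g_{i_1}\cdots g_{i_k}$ lie in $\mathfrak{bs}(A)\cup\{0\}$ because $\mathfrak{bs}(A)$ is multiplicative, and they span $A$; being a spanning subset of the basis $\mathfrak{bs}(A)$, they must exhaust it, so each $a_i$ equals a word in the $g_j$. Applying $\psi$ and using $H\subseteq\mathfrak{bs}(B)$ together with the multiplicativity of $\mathfrak{bs}(B)$, this shows in the setting of (i) that $\psi(a_i)\in\mathfrak{bs}(B)\cup\{0\}$ for every $i\in I$; in the setting of (ii) the elementary computation $\psi(a_i)\psi(a_{i'})=\psi(a_ia_{i'})$ with $a_ia_{i'}\in\mathfrak{bs}(A)\cup\{0\}$ gives the weaker but sufficient fact that products of values of $\psi$ on $\mathfrak{bs}(A)$ are again such values or $0$.

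For part (i) I would partition the index set. Put $I_0=\{i\in I\mid\psi(a_i)=0\}$ and, for each $j\in J$, form the fibre $I_j=\{i\in I\mid\psi(a_i)=b_j\}$; by the previous paragraph the sets $I_0$ and $\{I_j\}_{j\in J}$ cover $I$, and $j\in I_j$ since $b_j=\psi(a_j)$. Taking $a_j$ as representative of $I_j$, I claim that
\[
\{a_i\mid i\in I_0\}\cup\{a_i-a_j\mid j\in J,\ i\in I_j\setminus\{j\}\}
\]
is the required $\mathfrak{bs}(A)$-regular basis of $\mathfrak{Ker}(\psi)$. Each listed element visibly lies in the kernel and is $\mathfrak{bs}(A)$-regular; spanning follows by rewriting an arbitrary kernel element $\sum_i c_ia_i$ via the relations $\sum_{i\in I_j}c_i=0$ forced by the linear independence of the $b_j$, while linear independence of the list is immediate from that of $\mathfrak{bs}(A)$.

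For part (ii), spanning of $\{\psi(a_i)\mid i\in I\}$ is automatic from surjectivity, and the multiplicativity computation above shows that the distinct nonzero values of $\psi$ on $\mathfrak{bs}(A)$ are closed under multiplication modulo $0$; choosing $J$ to consist of one index from each fibre on which $\psi$ is nonzero then yields a spanning multiplicative set $\{b_i=\psi(a_i)\mid i\in J\}$. The main obstacle, and the only point where the regularity hypothesis is used, is the \emph{linear independence} of this set. To prove it I would suppose $\sum_{j\in J}c_jb_j=0$, so that $x=\sum_{j\in J}c_ja_j\in\mathfrak{Ker}(\psi)$, and expand $x$ in the given regular basis $P$. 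Every element of $P$ is either a single $a_i$ with $\psi(a_i)=0$ or a difference $a_i-a_{i'}$ with $\psi(a_i)=\psi(a_{i'})$, so each element of $P$ is supported inside a single fibre of $\psi$. For a fixed fibre $C$ on which $\psi\neq0$, the linear functional summing the $\mathfrak{bs}(A)$-coordinates over $C$ annihilates all of $P$ (each difference within $C$ contributes $+1$ and $-1$, and each pure $a_i$ with $i\in I_0$ lies outside $C$), while it returns exactly the coefficient $c_j$ of the unique representative $j\in J\cap C$. Hence every $c_j=0$, proving independence and completing the argument.
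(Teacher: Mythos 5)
Your argument is correct, and in both parts it takes a genuinely different (and leaner) route than the paper's. The shared ingredient in part (i) is the observation that every basis element is a nonzero word in the generators, whence $\psi(a_i)\in\mathfrak{bs}(B)\cup\{0\}$ for all $i$; but where you then partition $I$ into the zero set and the fibres of $i\mapsto\psi(a_i)$ and exhibit an explicit regular basis of the kernel in one stroke, the paper instead proves that the (generally dependent) set $Z=\{a_i\mid\psi(a_i)=0\}\cup\{a_j-a_k\mid\psi(a_j)=\psi(a_k)\}$ spans $\mathfrak{Ker}(\psi)$ by a minimal-counterexample induction on the number of support indices lying outside $J$, and then extracts a basis from $Z$. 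In part (ii) the divergence is larger: the paper well-orders $I$, introduces a successor function $\mathfrak{p}$ and decomposes $I$ into ``rays'' (which are in effect your fibres, linearly ordered), takes $J$ to be the set of ray minima, and proves linear independence of the $b_j$ by a maximal-index argument, after first establishing that $\{a_i\mid i\in I_0\}\cup\{a_i-a_{\mathfrak{p}(i)}\mid i\in I_1\}$ is itself a basis of the kernel. Your coordinate-sum functionals $\lambda_C$ make that intermediate construction unnecessary: every element of a regular basis of $\mathfrak{Ker}(\psi)$ is supported in a single fibre and is balanced there, so $\lambda_C$ annihilates the whole kernel, and evaluating it on $\sum_j c_ja_j$ isolates one coefficient at a time. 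What the paper's approach buys is a canonical regular basis of $\mathfrak{Ker}(\psi)$ as a by-product; what yours buys is brevity and the elimination of the well-ordering and ray bookkeeping. One small point, common to both proofs: if $A$ is unital and $1\in\mathfrak{bs}(A)$, the claim that every $a_i$ is a word in $g_1,\dots,g_m$ requires admitting the empty word (equivalently, adjoining $1$ to the generating set and noting $\psi(1)=1\in\mathfrak{bs}(B)$); this affects neither argument in substance.
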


\begin{proof}
(i)     For every $i,j\in I$ we have either    $b_ib_j=0$ or $b_ib_j=b_k$ for some  $k\in J$.
Denote by $K$ the $F$-vector space spanned   by the set
\[
Z=\{\; a_i\in \mathfrak{bs}(A)\;   \vert \;  \psi(a_i)=0\;  \}\cup
\{\;   a_j-a_k \;  \vert  \; \psi(a_j)=\psi(a_k), \;  a_j,a_k\in \mathfrak{bs}(A)\; \}.
\]
Clearly  $K\subseteq\mathfrak{Ker}(\psi)$.
Let us prove that $K=\mathfrak{Ker}(\psi)$.

Let    $v=\sum_{i\in X}\alpha_i a_i\in\mathfrak{Ker}(\psi)\setminus K$,  such that $\alpha_i\neq 0$  for all $i$ in the finite subset $X\subseteq I$. Let us choose the element $v$ such that  the cardinality $\vert X\setminus J\vert$  is minimal. If $X\subseteq J$ then $\{b_i=\psi(a_i) \vert \, i\in X\}$ is an  $F$-linear dependent subset of the algebra $B$, a contradiction. Hence there exists  $i\in X\setminus J$ and $a_i=w(g_1,\dots,g_m)$,
where  $w(x_1,\dots,x_m)$ is a monomial in the free associative algebra $F\gp{x_1,\dots,x_m}$.
It follows that
\[
b_i=\psi(a_i)=w\big(\psi(g_1),\dots,\psi(g_m)\big).
\]
Since  $\mathfrak{bs}(B)=\{b_i\vert i\in J\}$ is a multiplicative basis  and $\psi(g_1),\dots,\psi(g_m)\in \mathfrak{bs}(B)$, we get
$w(\psi(g_1),\dots,\psi(g_m))=b_j$ for some $j\in J$. Therefore $\psi(a_i)=\psi(a_j)$ and $a_i-a_j\in K$. Fix the natural numbers $i,j$ and  put $X_0=(X\setminus \{i\})\cup \{j\}$. Then
\[
v+\alpha_i(a_j-a_i)=\sum_{s\in X_0}\alpha_s a_s\in \mathfrak{Ker}(\psi)\setminus K
\]
where $\alpha_j=\alpha_i$ and $\vert X_0\setminus J \vert=\vert X\setminus J \vert -1$, a contradiction. Hence $K=\mathfrak{Ker}(\psi)$ and,  in particular,  $\mathfrak{Ker}(\psi)$ has a basis which is a  $\mathfrak{bs}(A)$-regular set.

(ii)  Assume that  $\mathfrak{Ker}(\psi)$ has a basis ${\bf K}$ which is a $\mathfrak{bs}(A)$-regular set. By the Zorn's Lemma  we can assume that $I$ is a well-ordered set.

Put $I_0=\{i\in I\vert  a_i\in\mathfrak{Ker}(\psi)\}$ and
\[
I_1=\{\quad  i\in I \setminus I_0\quad  \vert \quad a_i-a_j\in\mathfrak{Ker}(\psi) \text{ for some}\, j>i \quad  \}.
\]
Define  the function  $\mathfrak{p}:I_1\to I$ by
\[
I_1\ni i\mapsto \min\{\quad  j\in I\setminus I_0\quad \vert \quad  j>i, \; a_i-a_{j}\in\mathfrak{Ker}(\psi)\quad \}.
\]
Put $I_2=I_1\setminus \mathfrak{p}(I_1)$.

 We split the proof in several steps:

\emph{Step 1}: If $i,j\in I_1$ and $i<j$ then $\mathfrak{p}(i)\not=\mathfrak{p}(j)$.

Let  $\mathfrak{p}(i)=\mathfrak{p}(j)$. Clearly $
a_i-a_{\mathfrak{p}(i)}, a_j-a_{\mathfrak{p}(j)}\in\mathfrak{Ker}(\psi)$
and
\[
a_i-a_j=(a_i-a_{\mathfrak{p}(i)})-( a_j-a_{\mathfrak{p}(j)})\in\mathfrak{Ker}(\psi),
\]
so that $\mathfrak{p}(i)\leq j<\mathfrak{p}(j)=\mathfrak{p}(i)$, a contradiction.

\emph{Step 2}:  For every $i\in I_1$ we define  the corresponding $i$-ray
\[
R(i)=\{\mathfrak{p}_0(i)<\mathfrak{p}_1(i)<\mathfrak{p}_2(i)<\cdots\mid \mathfrak{p}_k(i)\in I\},
\]
where $\mathfrak{p}_0(i)=i$, $\mathfrak{p}_1(i)=\mathfrak{p}(i)$  and, moreover,  $\mathfrak{p}_{n+1}(i)=\mathfrak{p}(\mathfrak{p}_n(i))$
if $\mathfrak{p}_n(i)\in I_1$ while $\mathfrak{p}_{n+1}(i)$ is not defined if $\mathfrak{p}_n(i)\notin I_1$.

Note that every $i$-ray is contained in a unique maximal $j-$ray. Moreover, a $j$-ray is maximal for $j\in I_1$ if and only if $j\in I_2$.
The former part follows from the fact that the well-ordered set $I$ does not contain any infinite decreasing chain, and the latter one is trivial as for every $j\in \mathfrak{p}(I_1)$ with $\mathfrak{p}(i)=j$ we have $R(j)\subset R(i)$.

\emph{Step 3}: For every two different maximal rays $R(i)$ and $R(j)$ we have $R(i)\cap R(j)=\emptyset$
and for every $i\in I_1$ there exists a unique minimal $f(i)\in I_2$ such that $i\in R(f(i))$.
Moreover, for $i,j\in I_1$ we have $\psi(a_i)=\psi(a_j)$ if and only if
$f(i)=f(j).$ In particular, as $f(f(i))=f(i)$, for every $i\in I_1$ we have that  $a_i-a_{f(i)}\in\mathfrak{Ker}(\psi)$.

Let us prove that the $\mathfrak{bs}(A)$-regular set
\[
 {\bf K}_1=\{a_i|i\in I_0\}\cup\{a_i-a_{\mathfrak{p}(i)}|i\in I_1\}
\]
is a basis of $\mathfrak{Ker}(\psi)$.

\emph{ Step 4}: $\langle {\bf K}_1\rangle_{F}=\mathfrak{Ker}(\psi)$.
As $\mathfrak{Ker}(\psi)$ has a $\mathfrak{bs}(A)$-regular basis ${\bf K}$ it is enough to prove that if $a_i-a_j\in\mathfrak{Ker}(\psi)$ with $j>i\not\in I_0$ then $a_i-a_j\in \langle {\bf K}_1\rangle_{F}$. By Step 3 we have $f(i)=f(j)=k$ and so
\[
i,j\in R(k)=\{k<k_1<k_2<\cdots<k_s=i<\cdots<j=k_{s+t}<\cdots\}.
\]
It follows that $a_i-a_j=\sum_{l=s}^{s+t-1}(a_{k_l}-a_{k_{l+1}})$ and $a_{k_l}-a_{k_{l+1}}\in {\bf K}_1$ for every $s\leq l <s+t$, yielding the claim.

 \emph{Step 5}: The set ${\bf K}_1$ is $F$-linearly independent.

Let $\sum_{i\in I_0}\alpha_ia_i+\sum_{j\in I_1}\beta_j(a_j-a_{\mathfrak{p}(j)})=0$,  where $\alpha_i,\beta_j\in F$ for every $i \in I_0$ and $j\in I_1$. As $I_0\cap I_1=\emptyset$ we have $\alpha_j=0$ for all $j\in I_0$.

Suppose  that $\beta_s \neq 0$ for some $s\in I_1$. Put $j=max\{s\vert \, \beta_s\neq 0\}$. Then  $\mathfrak{p}(j)>i$ for every $i$ such that $\beta_i\not=0$. It follows that $\beta_j=0$, a contradiction.

 \emph{Step 6.}  $\mathfrak{B}=\{b_i=\psi(a_i)|i\in I_3= I\setminus (I_0\cup \mathfrak{p}(I_1))\}$ is a multiplicative basis of $B$.

 Observe that if $b_i=\psi(a_i)\in B$ and $i\not\in I_3$, then either
 $i\in I_0$ and $b_i=0$ or $i\in \mathfrak{p}(I_1)$ and so $\psi(a_i)=b_i=\psi(a_{f(i)})=b_{f(i)}\in \mathfrak{B}$ with $f(i)\not\in I_0\cup \mathfrak{p}(I_1)$. As a consequence, we have $\langle \mathfrak{B}\rangle_{F}=B$.

Suppose now that $\sum_{i\in I_3}\beta_ib_i=0$  for some $\beta_i\in F$. Then $\sum_{i\in I_3}\beta_ia_i\in\mathfrak{Ker}(\psi)$ and so,  by Steps 4 and 5, we get
\begin{equation}\label{E:1}
\sum_{i\in I_3}\beta_ia_i=\sum_{j\in I_0}\alpha_ja_j+\sum_{s\in I_1}\gamma_s(a_s-a_{\mathfrak{p}(s)}).
\end{equation}
Since $I_0\cap (I_1\cup I_3)=\emptyset$ we have  $\alpha_j=0$ for every $i\in I_0$.  Let $t=\max\{s\vert \,\gamma_s\not=0\}$.
As $j<\mathfrak{p}(j)\leq \mathfrak{p}(t)$ for every $j$ such that $\beta_j \neq 0$ and  $\mathfrak{p}(t)\not\in I_3$,  relation (\ref{E:1})  forces  $\gamma_t=0$, a contradiction. Thus $\mathfrak{B}$ is an $F$-basis of $B$. It remains to show that $\mathfrak{B}$ is multiplicative. Let $i,j\in I_3$. Then there exists $k\in I$ such that $a_ia_j=a_k$. If $k\in I_0$ then one has $b_ib_j=0$. On the other hand, if $k\in I_3$ then $b_ib_j\in \mathfrak{B}$. Finally, if $k\not\in \left( I_0\cup I_3\right)$ then $k\in \mathfrak{p}(I_1)$, so that $b_k=b_{f(k)}\in \mathfrak{B}$.
% since $f(k)\in I_3.$
\end{proof}

\begin{remark}
Suppose that $A$ is a finitely generated  associative algebra over a field $F$ having a multiplicative basis $\mathfrak{bs}(A)$. Assume that we can choose a  minimal set of generators $\{a_1,\ldots,a_n\}$ of $A$ such that $\{a_1,\ldots,a_n\}\subseteq \mathfrak{bs}(A)$. For a set $X=\{x_1,\ldots,x_n\}$ we denote by $F\langle X \rangle$ the free $F$-associative algebra over $X$ and by $X^\ast$ the free monoid on  $x_1,\ldots,x_n$. Clearly, there exists an homomorphism $\psi:F\langle X \rangle \to A$ such that
$\psi(x_i)=a_i$ and $\mathfrak{Ker}(\psi)$ has an $X^\ast$-regular $F$-basis (by Theorem  \ref{T:1}).
\end{remark}

Let $\mathfrak{L}$ be the relatively free nilpotent restricted Lie algebra of class 2 on the set $\{x,y\}$ over a field of characteristic 2. Denote by $\mathfrak{c}_{(s)}$ the nilcyclic restricted Lie algebras of exponent $s$ and put $\mathfrak{h}_{(s)}=\mathfrak{L}/I$, where $I$ is the restricted ideal of $\mathfrak{L}$ generated by $x^{[2]^s},y^{[2]^s}$ and $[x,y]^{[2]^s}$. For every $m,n\geq 0$ and $s>0$ in the sequel we will use the restricted Lie algebra
\[
L(m,n;s)=\underbrace{\mathfrak{c}_{(s)} \oplus \cdots \oplus \mathfrak{c}_{(s)}}_{m}\oplus \underbrace{\mathfrak{h}_{(s)}\oplus  \cdots \oplus \mathfrak{h}_{(s)}}_{n}.
\]
The restricted enveloping algebra of $L(m,n;s)$ admits an  f.m.b. Indeed we have

\begin{lemma}\label{L:2}
For every $m,n\geq 0$ and $s>0$ the associative algebra $u(L(m,n;s))$ has a filtered multiplicative basis.
\end{lemma}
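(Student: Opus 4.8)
The plan is to reduce the statement to its two \emph{building blocks} and then to treat each of them. Since the restricted enveloping algebra of a direct sum of restricted Lie algebras is the tensor product of the corresponding restricted enveloping algebras, we have
\[
u(L(m,n;s))\cong u(\mathfrak{c}_{(s)})^{\otimes m}\otimes_F u(\mathfrak{h}_{(s)})^{\otimes n}.
\]
By property (F3), and since $F$ itself trivially has the f.m.b. $\{1\}$ (which covers the cases $m=0$ or $n=0$), it therefore suffices to show that both $u(\mathfrak{c}_{(s)})$ and $u(\mathfrak{h}_{(s)})$ admit an f.m.b. The factor $u(\mathfrak{c}_{(s)})$ is immediate: writing $\mathfrak{c}_{(s)}=\langle x\rangle_p$ with $x^{[2]^s}=0$, in $u(\mathfrak{c}_{(s)})$ one has $x^2=x^{[2]}$, hence $x^{2^i}=x^{[2]^i}$ and $u(\mathfrak{c}_{(s)})\cong F[x]/(x^{2^s})$; the powers $\{1,x,x^2,\dots,x^{2^s-1}\}$ plainly form an f.m.b.

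The core of the argument is the factor $u(\mathfrak{h}_{(s)})$. Here $\mathfrak{h}_{(s)}$ is generated by $x,y$ with $z:=[x,y]$, and $\dim_F\mathfrak{h}_{(s)}=3s$; nilpotency of class $2$ together with $p=2$ forces $x^2=x^{[2]}$, $y^2=y^{[2]}$ and $z$ to be central in $u(\mathfrak{h}_{(s)})$, while $z=xy+yx$, $x^{2^s}=y^{2^s}=z^{2^s}=0$ and $\dim_F u(\mathfrak{h}_{(s)})=2^{3s}$. My candidate for an f.m.b. is the set of \emph{words}
\[
\mathfrak{B}=\{\,x^{a}(yx)^{c}y^{b}\ \mid\ 0\le a,b,c\le 2^s-1\,\},
\]
which has exactly $2^{3s}$ elements and contains $1$. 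I would first check that $\mathfrak{B}$ is an $F$-basis: a short computation with $t=yx$, based on the relation $t^2=x^2y^2+zt$, shows that $x^{a}(yx)^{c}y^{b}$ is homogeneous of height $a+b+2c$ and that, among the monomials $x^{a'}y^{b'}z^{c'}$ of Lemma~\ref{L:1}, its term of highest $z$-degree is exactly $x^{a}y^{b}z^{c}$, with coefficient $1$. Ordering monomials by height and then by descending $z$-degree makes the transition matrix from $\mathfrak{B}$ to the basis of Lemma~\ref{L:1} unitriangular, so $\mathfrak{B}$ is indeed a basis. The \emph{filtered} property then follows for free: each element of $\mathfrak{B}$ being height-homogeneous, Lemma~\ref{L:1} gives $x^{a}(yx)^{c}y^{b}\in\omega(\mathfrak{h}_{(s)})^{\,a+b+2c}$, and the same unitriangularity shows that the elements of $\mathfrak{B}$ lying in $\omega^{n}$ form a basis of $\omega^{n}$ for every $n\ge 1$.

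The remaining, and decisive, point is that $\mathfrak{B}$ is \emph{multiplicative}. The key feature is that every element of $\mathfrak{B}$ is an honest word in $x,y$, so the product of two of them is again such a word; multiplicativity is thus equivalent to a normal form statement: \emph{every word in $x$ and $y$ equals, in $u(\mathfrak{h}_{(s)})$, either $0$ or a unique member of $\mathfrak{B}$.} I would prove this by rewriting, using that $x^2,y^2,z$ are central, that $x^{2^s}=y^{2^s}=0$, and — crucially — keeping $yx$ as an indivisible block so that the corrections arising from $yx=xy+z$ are absorbed instead of producing sums. Concretely, a junction $y^{b}x^{a'}$ is rewritten as $(x^2)^{\lfloor a'/2\rfloor}(y^2)^{\lfloor b/2\rfloor}\,y^{\beta}x^{\alpha}$ with $\alpha,\beta\in\{0,1\}$ and $y^{\beta}x^{\alpha}\in\{1,x,y,yx\}$; the central squares are pulled out and the middle segment $(yx)^{c}y^{\beta}x^{\alpha}(yx)^{c'}$ collapses to a single power of $yx$ (times central squares) via $(yx)x=x^2y$ and $y(yx)=y^2x$.

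The main obstacle is the interaction with truncation. When the exponent $c''$ produced in this collapse satisfies $c''\ge 2^s$, one must verify that $(yx)^{c''}$ (times central monomials) still reduces to an \emph{in-range} word of $\mathfrak{B}$ or to $0$; this is forced by $z^{2^s}=0$ and $x^{2^s}=y^{2^s}=0$ through repeated use of $t^2=x^2y^2+zt$, and carrying it out uniformly in $s$ is the technical heart of the proof. As a guide, the case $s=1$ — where $\mathfrak{B}=\{1,x,y,xy,yx,xyx,yxy,xyxy\}$ is exactly the set of alternating words and multiplicativity can be checked by an explicit $8\times 8$ table — already displays the mechanism in transparent form.
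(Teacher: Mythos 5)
Your reduction via (F3) to the two tensor factors is exactly the paper's first step, and your direct treatment of $u(\mathfrak{c}_{(s)})$ is fine (the paper instead quotes Theorem 1 of \cite{Bovdi_Grishkov_Siciliano}). For $u(\mathfrak{h}_{(s)})$ you take a genuinely different route: an explicit candidate basis $\mathfrak{B}=\{x^a(yx)^cy^b\}$ verified by hand, rather than the paper's argument, which presents $u(\mathfrak{h}_{(s)})$ as the quotient of $u(\mathfrak{L})$ by an ideal spanned by the words $x^{2^s}\omega_1$, $y^{2^s}\omega_2$ and the differences $\left((xy)^{2^s}+(yx)^{2^s}\right)\omega_3$ --- a $\mathfrak{bs}(u(\mathfrak{L}))$-regular spanning set --- and then invokes Theorem \ref{T:1}(ii) to push a multiplicative basis of words down to the quotient. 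Your linear-independence and filtration arguments are sound: with $t=yx$ the relation $t^2=x^2y^2+zt$ does give the unitriangular transition to the PBW basis ordered by height and descending $z$-degree, and height-homogeneity of words together with Lemma \ref{L:1} yields the filtered property once $\mathfrak{B}$ is known to be a basis.

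The gap is that the decisive step --- multiplicativity --- is precisely the one you do not carry out. Everything reduces to showing that $x^{a}(yx)^{c}y^{b}$ with $c\geq 2^s$ (and in-range $a,b$) equals $0$ or a single element of $\mathfrak{B}$, and you explicitly defer this as ``the technical heart''. It is not routine: writing $u=x^2y^2$, $v=z$, one has $t^{c}=P_c(u,v)+Q_c(u,v)\,t$ with $P_{c+1}=uQ_c$ and $Q_{c+1}=P_c+vQ_c$, so $t^c$ is a priori a \emph{sum} of PBW monomials, and its collapse to a single word of $\mathfrak{B}$ after imposing $x^{2^s}=y^{2^s}=z^{2^s}=0$ rests on characteristic-$2$ identities such as $Q_{2^k}=v^{2^k-1}$, equivalently $(yx)^{2^s}=x(yx)^{2^s-1}y$ in $u(\mathfrak{h}_{(s)})$; nothing in your sketch establishes these, nor the analogous statements for all larger $c$ and all junction types. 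If you want to keep your explicit $\mathfrak{B}$, the cheapest repair is to run your truncation-free computations in $u(\mathfrak{L})$ (where they show that $\{x^a(yx)^cy^b \mid a,b,c\geq 0\}$ is a multiplicative basis of words), then apply Theorem \ref{T:1}(ii) as the paper does: this shows the set of nonzero values of words in $u(\mathfrak{h}_{(s)})$ is a multiplicative basis, and since your $\mathfrak{B}$ is a linearly independent set of $2^{3s}$ nonzero word-values it must coincide with that set. As written, however, the multiplicativity of $\mathfrak{B}$ is asserted rather than proved.
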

\begin{proof} Since we have
\[
u(L(m,n;s))\cong \underbrace{u(\mathfrak{c}_{(s)}) \otimes_{F} \cdots \otimes_{F}u(\mathfrak{c}_{(s)})}_{m}\otimes_{F} \underbrace{u(\mathfrak{h}_{(s)}) \otimes_{F} \cdots  \otimes_{F}u(\mathfrak{h}_{(s)})}_{n},
\]
by virtue of (F3) and Theorem 1 of \cite{Bovdi_Grishkov_Siciliano} it enough to show that $u(\mathfrak{h}_{(s)})$ has an f.m.b.
Let $\mathfrak{L}$ be the  relatively free nilpotent restricted Lie algebra of class 2 on the set $\{x,y\}$ and $I$ the restricted ideal of $\mathfrak{L}$ generated by $x^{[2]^s},y^{[2]^s}$ and $[x,y]^{[2]^s}$. Consider the unique associative homomorphism $\hat{\pi}:u(\mathfrak{L})\to u(\mathfrak{h}_{(s)})$ extending the canonical map
$\pi: \mathfrak{L} \to \mathfrak{L}/I=\mathfrak{h}_{(s)}$. As $\mathfrak{L}^{[2]}\subseteq Z(\mathfrak{L})$, it is clear that $\mathfrak{Ker}(\hat{\pi})=Iu(L)$ is spanned by the elements of the form $x^{2^s}\omega_1$, $y^{2^s}\omega_2$, $\left((xy)^{2^s}+(yx)^{2^s}\right)\omega_3$, where the $\omega_i$ are monomials in $x,y$. Consequently, by Theorem \ref{T:1}
we see that $u(\mathfrak{h}_{(s)})$ has a multiplicative basis $\mathfrak{bs}(u(\mathfrak{h}_{(s)}))$ with $\mathfrak{bs}(u(\mathfrak{h}_{(s)})\backslash \{ 1\} \subset \omega(\mathfrak{h}_{(s)})$. Finally, as $\mathfrak{h}_{(s)}$ is finite-dimensional and $p$-nilpotent we have $\omega(\mathfrak{h}_{(s)})=\rad(u(\mathfrak{L}))$, so that $\mathfrak{bs}(u(\mathfrak{h}_{(s)}))$ contains an $F$-basis of $\rad(u(\mathfrak{h}_{(s)}))$.\end{proof}

We say that an associative algebra $A$ over a field of characteristic 2 is of \emph{Heisenberg type} if there exist $m,n\geq 0$, $s>0$, and an f.m.b. $\mathfrak{B}$ of $u(L(n,m,s))$ such that
$A\cong u(L(m,n;s))/J$ for some ideal $J$ of $u(L(m,n;s))$ having a $\mathfrak{B}$-regular basis.

Let $L$ be a finite-dimensional $p$-nilpotent restricted Lie algebra over a field of characteristic $p>0$. In \cite{Bovdi_Grishkov_Siciliano} we proved that if $L$ is abelian then $u(L)$ has a filtered multiplicative basis if and only if it is a direct sum of cyclic restricted subalgebras. Moreover, we showed that if $L$ has nilpotent class 2 and $p>2$ then $u(L)$ does not have any filtered multiplicative basis. Here we prove the following:

\begin{theorem}\label{T:2}
If $L\in {\mathfrak F}_p$ has nilpotency class 2 then  $u(L)$ has a filtered multiplicative basis if and only if  $p=2$
and $u(L)$ is of  Heisenberg type.
\end{theorem}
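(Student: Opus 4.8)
The plan is to prove both directions, but the real content lies in the ``only if'' part, since the converse follows almost immediately from the results already assembled. For the converse, suppose $p=2$ and $u(L)$ is of Heisenberg type. Then by definition $u(L)\cong u(L(m,n;s))/J$ for some ideal $J$ admitting a $\mathfrak{B}$-regular basis, where $\mathfrak{B}$ is an f.m.b. of $u(L(m,n;s))$ whose existence is guaranteed by Lemma~\ref{L:2}. Since $u(L(m,n;s))$ is a finitely generated local algebra and one may choose its generators inside $\mathfrak{B}$, Theorem~\ref{T:1}(ii) yields a multiplicative basis of the quotient $u(L)$. One then checks this multiplicative basis is in fact \emph{filtered}: because $J$ has a $\mathfrak{B}$-regular basis and $\mathfrak{B}$ is filtered, the surjection is compatible with the radical filtration, and the images of basis elements lying in $\rad^n(u(L(m,n;s)))$ give a basis of $\rad^n(u(L))$ by property (F1). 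This is the routine half.

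For the forward direction, assume $L\in\mathfrak{F}_p$ has nilpotency class $2$ and $u(L)$ has an f.m.b. First I would invoke the result of \cite{Bovdi_Grishkov_Siciliano} quoted in the text: if $p>2$ and $L$ has class $2$, then $u(L)$ has no f.m.b. Hence necessarily $p=2$. The substantive task is then to show that $u(L)$ is of Heisenberg type, i.e.\ to realize it as a suitable quotient of some $u(L(m,n;s))$ modulo a $\mathfrak{B}$-regular ideal. The strategy is to present $L$ as a homomorphic image of a free object: since $L$ has class $2$ and exponent bounded by some $s$ (take $s=e(L)$), $L$ is generated by finitely many elements each of class-$2$, exponent-$s$ type, so there is a surjection of restricted Lie algebras $\Phi\colon L(m,n;s)\twoheadrightarrow L$ for appropriate $m,n$, chosen so that the generators split into those generating nilcyclic summands and those generating copies of $\mathfrak{h}_{(s)}$. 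This lifts to a surjection of restricted enveloping algebras $\hat\Phi\colon u(L(m,n;s))\twoheadrightarrow u(L)$ with kernel $J=\ker\hat\Phi$.

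The heart of the argument, and the step I expect to be the main obstacle, is showing that $J$ can be taken to have a $\mathfrak{B}$-regular basis for a suitable f.m.b.\ $\mathfrak{B}$ of $u(L(m,n;s))$. This is precisely where Theorem~\ref{T:1}(i) enters: given the f.m.b.\ $\mathfrak{B}$ supplied by Lemma~\ref{L:2}, if the assumed f.m.b.\ of $u(L)$ can be matched up with the images $\{\hat\Phi(b)\mid b\in\mathfrak{B}'\}$ for a subset $\mathfrak{B}'\subseteq\mathfrak{B}$ containing the images of the chosen generators, then Theorem~\ref{T:1}(i) delivers a $\mathfrak{B}$-regular basis of $J$. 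The delicate point is to arrange that the given f.m.b.\ of $u(L)$ is compatible with the canonical monomial basis of $u(L(m,n;s))$ from Lemma~\ref{L:1}, so that the generators of $u(L)$ lie among the images of $\mathfrak{B}$; here one uses the local structure, properties (F1)--(F2), and the explicit description of the dimension subalgebras $\mathfrak{D}_m(L)$ via Lemma~\ref{L:1} to control heights and ensure the matching respects the radical filtration. Once this compatibility is secured, Theorem~\ref{T:1}(i) finishes the proof by exhibiting the required $\mathfrak{B}$-regular basis of $J$, so that $u(L)\cong u(L(m,n;s))/J$ is of Heisenberg type by definition.
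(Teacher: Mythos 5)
There is a genuine gap in your forward direction, and it sits exactly at the point you wave past. You assert that, because $L$ has class $2$ and bounded exponent, there is a surjection of restricted Lie algebras $\Phi\colon L(m,n;s)\twoheadrightarrow L$ ``chosen so that the generators split into those generating nilcyclic summands and those generating copies of $\mathfrak{h}_{(s)}$.'' No such surjection exists for a general class-$2$ algebra: $L(m,n;s)$ is a direct sum of nilcyclic algebras and copies of $\mathfrak{h}_{(s)}$, so a generator-matching surjection forces the images of the generators to satisfy the relations of that direct sum --- in particular each generator must commute with all but at most one of the others, and the squares of the generators and the commutators $[g_{2i-1},g_{2i}]$ must be central. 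A class-$2$ restricted Lie algebra on three generators $x,y,z$ with $[x,y],[y,z],[x,z]$ all nonzero already fails the pairing condition. Establishing that the f.m.b.\ hypothesis \emph{forces} this structure is the entire content of the paper's proof: one takes ${\bf \Gamma}=\mathfrak{bs}(u(L))\setminus(\omega(L)^2\cup\{1\})=\{g_1,\dots,g_t\}$, writes $g_i\equiv c_i\pmod{\omega^2(L)}$ with $c_i\in L$, and compares the six reorderings $\mathfrak{m}_1,\dots,\mathfrak{m}_6$ of a product $g_ig_jg_k$ modulo $\omega(L)^4$; property (F2) bounds $\dim_F\langle\mathfrak{m}_1,\dots,\mathfrak{m}_6\rangle_F$ by $4$, and a case analysis via Lemma~\ref{L:1} shows that $[c_i,c_k]$ and $[c_j,c_k]$ lie in $\mathfrak{D}_3(L)$ whenever $[c_i,c_j]\notin\mathfrak{D}_3(L)$, whence $g_k$ commutes with both $g_i$ and $g_j$. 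A further application of (F2) to $g_{2i-1}^2g_{2i}$ versus $g_{2i}g_{2i-1}^2$ gives centrality of the squares. Only then does the surjection exist --- and note that it maps $L(m,n;s)$ into $u(L)^-$, not onto $L$ itself: the $g_i$ need not lie in $L$ (they are only congruent to elements of $L$ modulo $\omega^2(L)$), and $s$ must be taken as the exponent of ${\bf \Gamma}$ in $u(L)^-$ rather than $e(L)$.

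By contrast, the step you single out as the main obstacle --- producing a $\mathfrak{B}$-regular basis of $J=\mathfrak{Ker}(\tilde\phi)$ --- is comparatively routine once the surjection is in hand: ${\bf \Gamma}$ is contained in the given f.m.b., the multiplicative basis $\mathfrak{B}$ of $u(L(m,n;s))$ consists of monomials in the chosen generators, so every element of $\mathfrak{bs}(u(L))$ is the image of an element of $\mathfrak{B}$, and Theorem~\ref{T:1}(i) applies directly. Your sufficiency direction is essentially the paper's. So the proposal has the right scaffolding but omits the combinatorial argument that is the actual substance of the forward direction.
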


\begin{proof}
Suppose that  $u(L)$ has an f.m.b. $\mathfrak{bs}(u(L))$ such that  $1\in \mathfrak{bs}(u(L))$. By Theorem 3 of \cite{Bovdi_Grishkov_Siciliano} the ground field must have characteristic 2. Let
\[
{\bf \Gamma}=\mathfrak{bs}(u(L)) \backslash \left(\omega(L)^2\cup\{1\}\right)=\{g_1,\dots,g_t\}.
\]
Then ${\bf \Gamma}$ is a minimal set of generators of $u(L)$ as a unitary associative $F$-algebra and, moreover,  by property (F1) and Lemma \ref{L:1}, for every $i=1,\ldots,t$ there exists $c_i\in L$ such that $c_i\equiv g_i \pmod{\omega^2(L)}$.
As $L$ is not commutative, by Lemma 2 of \cite{Bovdi_Grishkov_Siciliano} there exist $1\leq i,j \leq t$ such that $[c_i,c_j]\not\in \mathfrak{D}_3(L)$.  If  $1\leq k \leq t$ with $k\neq i,j$, as an easy consequence of Lemma \ref{L:1} we deduce the following Facts:
 \begin{itemize}
 \item[{\normalfont(a)}] if $c_i[c_j,c_k]\in \omega(L)^4$ then $[c_j,c_k]\in \mathfrak{D}_3(L)$;
\item[{\normalfont(b)}] if $c_j[c_i,c_k]\in \omega(L)^4$ then $[c_i,c_k]\in \mathfrak{D}_3(L)$.
\item[{\normalfont(c)}]  $c_k[c_i,c_j]+c_j[c_i,c_k]\not\in \omega(L)^4$;
\item[{\normalfont(d)}]  $c_k[c_i,c_j]+c_i[c_j,c_k]\not\in \omega(L)^4$;
\item[{\normalfont(e)}] $c_i[c_j,c_k]+c_j[c_i,c_k]+c_k[c_i,c_j]\not\in \omega(L)^4$.
\end{itemize}
Consider the following six elements:
\begin{align*}
\mathfrak{m}_1=g_ig_jg_k&\equiv c_ic_jc_k \pmod {\omega(L)^4};\\
\mathfrak{m}_2=g_ig_kg_j&\equiv  c_ic_jc_k+c_i[c_j,c_k]\pmod {\omega(L)^4};\\
\mathfrak{m}_3=g_jg_ig_k&\equiv  c_ic_jc_k+c_k[c_i,c_j]\pmod {\omega(L)^4};\\
\mathfrak{m}_4=g_jg_kg_i&\equiv  c_ic_jc_k+c_k[c_i,c_j]+c_j[c_i,c_k]\pmod {\omega(L)^4};\\
\mathfrak{m}_5=g_kg_ig_j&\equiv  c_ic_jc_k+c_i[c_j,c_k]+c_j[c_i,c_k]\pmod {\omega(L)^4};\\
\mathfrak{m}_6=g_kg_jg_i&\equiv  c_ic_jc_k+c_i[c_j,c_k]+c_j[c_i,c_k]+c_k[c_i,c_j]\pmod {\omega(L)^4}.\\
\end{align*}
Consequently, by property (F2) we get
\[
\begin{split}
\dim_{F}\Big(\langle\mathfrak{m}_1,\ldots,\mathfrak{m}_6 \rangle_{F}\Big)=&\dim_{F}\Big(\langle\mathfrak{m}_1,\ldots,\mathfrak{m}_6 \rangle_{F}+\omega^4(L)/\omega(L)^4\Big) \leq 4,
\end{split}
\]
 so that we must have $\mathfrak{m}_s=\mathfrak{m}_t$ for some $s\not=t$.
By Facts (a) and (b) we immediately have that
\[
\{\mathfrak{m}_1,\mathfrak{m}_2,\mathfrak{m}_5\} \cap \{\mathfrak{m}_3,\mathfrak{m}_4,\mathfrak{m}_6\}=\emptyset.
\]
We claim  that $[c_i,c_k]\in \mathfrak{D}_3(L)$. Suppose the contrary. Notice that, by Fact (b), we have $\mathfrak{m}_2\not=\mathfrak{m}_5$ and $\mathfrak{m}_3\not=\mathfrak{m}_4$. Now we distinguish two cases:

\emph{Case 1:} $[c_j,c_k]\in \mathfrak{D}_3(L)$.
Then property (F2) yields $\mathfrak{m}_1=\mathfrak{m}_2$ and $\mathfrak{m}_4=\mathfrak{m}_6$ and, moreover, by Lemma \ref{L:1} we have $c_i[c_j,c_k]\in \omega(L)^4$. It follows that
\[
\dim_{F} \langle\mathfrak{m}_1,\ldots,\mathfrak{m}_6 \rangle_{F}<4
\]
and so $\mathfrak{m}_1=\mathfrak{m}_5$ or $\mathfrak{m}_3=\mathfrak{m}_6.$
In both cases we conclude that $c_j[c_i,c_k]\in \omega(L)^4$, contradicting Fact (b).

\emph{Case 2:} $[c_j,c_k]\not\in \mathfrak{D}_3(L)$. Then $c_i[c_j,c_k]\not\in \omega(L)^3$, so that $\mathfrak{m}_1\neq \mathfrak{m}_2$ and $\mathfrak{m}_4\neq\mathfrak{m}_6$. It follows that $\mathfrak{m}_1=\mathfrak{m}_5$ and $\mathfrak{m}_3=\mathfrak{m}_6$ and, in turn,
\[
c_i[c_j,c_k]+ c_j[c_i,c_k]\in \omega(L)^4,
\]
which is impossible by Lemma \ref{L:1}.

Therefore  $[c_i,c_k]\in \mathfrak{D}_3(L)$ and in a similar way one can show that $[c_j,c_k]\in \mathfrak{D}_3(L)$, as well. It follows that $g_ig_k\equiv g_kg_i\pmod{\omega(L)^3}$ and $g_jg_k\equiv g_kg_j\pmod{\omega(L)^3}$ and then, by Lemma \ref{L:1} and (F2),  $g_k$ commutes both with $g_i$ and $g_j$. Thus, for any $g_i\in {\bf \Gamma}$ one has that either $g_i$ is in the center $Z(u(L))$ of $u(L)$ or there exists a unique $g_j\in {\bf \Gamma}$ which does not commute with $g_i$.
We can then reindex the elements of  $ {\bf \Gamma}\backslash \{1\}$ in such a way that  $[g_{2i-1},g_{2i}]\neq 0$ for $i=1,\ldots,r$ and all the other commutators are zero. Consider the restricted Lie algebra $L(m,n;s)$, where $n=t-2m$ and $s$ is the exponent of ${\bf \Gamma}$ in $u(L)^-$.  For every $i=1,\ldots,m$ let $x_i,y_i$ be  generators of $i$th copy of $\mathfrak{h}_{(s)}$ and for every $j=1,\ldots,n$ let $z_j$ be a generator of the $j$th copy of $\mathfrak{c}_{(s)}$. For every $i=1,\ldots,m$ one has
\[
g_{2i-1}^2g_{2i}\equiv c_{2i-1}^2c_{2i} \equiv c_{2i} c_{2i-1}^2 \equiv g_{2i}g_{2i-1}^2\pmod{\omega(L)^4}
\]
and so property (F2) forces $[g_{2i-1}^2,g_{2i}]=0$. Thus $g_{2i-1}^2 \in Z(u(L))$ and in a  similar way one can prove that $g_{2i}^2 \in Z(u(L))$, as well. It follows that $[g_{2i-1},g_{2i}]\in Z(u(L))$ for every $i=1,\ldots,m$. The just proved properties assure the existence a unique surjective restricted homomorphism $\phi: L(m,n;s) \longrightarrow u(L)^-$ such that $\phi(x_i)=g_{2i-1}$ and $\phi(y_i)=g_{2i}$ for every $i=1,\ldots,m$ and $\phi(z_j)=g_{2m+j}$ for every $j=1,\ldots,n$. Let $\tilde{\phi}: u(L(m,n;s)) \longrightarrow u(L)$ denote the unique algebra homomorphism extending $\tilde{\phi}$. Then Theorem  \ref{T:1} allows to conclude that  $A\cong u(L(m,n;s))/J$, where $J=\mathfrak{Ker}(\tilde{\phi})$ is an ideal of $u(L)$ having a $\mathfrak{B}$-regular $F$-basis, proving the necessity part.

The sufficiency part is an immediate consequence of Lemma \ref{L:2} and Theorem  \ref{T:1}. \end{proof}

We conclude this section with  some open problems.
We say that an associative algebra $A$ over a field of characteristic 2 is of \emph{strong Heisenberg type} if there exist $m,n\geq 0$, $s>0$, and an f.m.b. $\mathfrak{B}$ of $u(L(n,m;s))$ such that
$A\cong u(L(m,n;s)/J)$ for some ideal $J$ of $L(m,n;s)$ having a $\mathfrak{B}$-regular basis.
As $u(L(m,n;s)/J)\cong u(L(m,n;s))/Ju(L(m,n;s))$, it is clear that in such a case $A$ is of Heisenberg type.

If the following problem has a positive answer then the conclusion of Theorem \ref{T:2} would be considerably improved:
\begin{problem}\label{heis}
Let $L\in {\mathfrak F}_p$ of nilpotency class 2 over a field of characteristic 2 and suppose that $u(L)$ has an f.m.b. Is $u(L)$ of strong Heisenberg type?
 \end{problem}

Likely, the characterization of the restricted Lie algebras $L$ such that $u(L)$ is of Heisenberg type could be a delicate task involving the isomorphism problem for restricted Lie algebras:

 \begin{problem}\label{fin}
Characterize the restricted Lie algebras $L$ whose restricted enveloping algebra $u(L)$ is of Heisenberg type.
 \end{problem}

Finally, we suspect that the following problem could have a positive answer:

\begin{problem}\label{prob3}
Suppose that $L\in \mathfrak{F}_p$ is not abelian and $u(L)$ has an f.m.b. Is it true that $p=2$ and $L$ is nilpotent of class 2?
 \end{problem}

\section{Associated graded algebras and group algebras}

For an associative algebra $A$, in this section we will consider the associated graded algebra
\[
\mathrm {gr}(A)=\bigoplus_{i\geq 0} \rad^i(A)/ \rad^{i+1}(A),
\]
associated to the filtration given by the powers of the  Jacobson radical $\rad(A)$ of $A$.

\begin{theorem}\label{T:3}
Let $A$ be a finite-dimensional associative algebra over a field $F$.
If $A$ has an f.m.b.,  then $\mathrm {gr}(A)$ has an f.m.b.
\end{theorem}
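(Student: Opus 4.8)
The plan is to construct an explicit f.m.b. of $\mathrm{gr}(A)$ directly from a given f.m.b. $\mathfrak{B}=\mathfrak{bs}(A)$ of $A$. For each $b\in \mathfrak{B}$ let $w(b)$ denote its \emph{radical height}, i.e. the largest integer $n$ with $b\in \rad^n(A)$ (here $\rad^0(A)=A$), and let $\bar b$ be the image of $b$ in the homogeneous component $\rad^{w(b)}(A)/\rad^{w(b)+1}(A)$ of $\mathrm{gr}(A)$. Since $w(b)$ is maximal, $b\notin \rad^{w(b)+1}(A)$ and hence $\bar b\neq 0$. The candidate basis is $\overline{\mathfrak{B}}=\{\bar b\mid b\in \mathfrak{B}\}$, and the whole proof amounts to checking that this set is an f.m.b. of $\mathrm{gr}(A)$.

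First I would verify that $\overline{\mathfrak{B}}$ is an $F$-basis. The key inputs are properties (F1) and (F2). For a fixed $n$, the basis elements of height exactly $n$ are precisely those in $(\mathfrak{B}\cap \rad^n(A))\setminus(\mathfrak{B}\cap \rad^{n+1}(A))$; by (F1) the two sets appearing here are bases of $\rad^n(A)$ and $\rad^{n+1}(A)$ respectively, so their images span and are linearly independent in the quotient $\rad^n(A)/\rad^{n+1}(A)$. Injectivity of $b\mapsto \bar b$ follows from (F2): two basis elements of common height $n$ whose images agree would satisfy $b\equiv b'\pmod{\rad^{n+1}(A)}$ with $b,b'\notin \rad^{n+1}(A)$, forcing $b=b'$; elements of distinct heights land in distinct nonzero homogeneous components. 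Summing over $n$ shows that $\overline{\mathfrak{B}}$ is a basis of $\mathrm{gr}(A)$ consisting of homogeneous elements.

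Next comes multiplicativity, which I expect to be the main point of the argument. Recall that in $\mathrm{gr}(A)$ the product of the degree-$m$ class $\bar b$ and the degree-$n$ class $\bar c$ is the class of $bc$ in $\rad^{m+n}(A)/\rad^{m+n+1}(A)$, where $m=w(b)$, $n=w(c)$. Since $\mathfrak{B}$ is multiplicative, $bc\in \mathfrak{B}\cup\{0\}$, and I would distinguish three cases. If $bc=0$, then $\bar b\,\bar c=0$. If $bc=d\in \mathfrak{B}$ with $w(d)=m+n$, then $\bar b\,\bar c=\bar d\in \overline{\mathfrak{B}}$. The delicate case is when $bc=d\in \mathfrak{B}$ but $w(d)>m+n$: here the height of the product \emph{jumps}, so $bc\in \rad^{m+n+1}(A)$ and its class in $\rad^{m+n}(A)/\rad^{m+n+1}(A)$ vanishes, giving $\bar b\,\bar c=0$. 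Thus in every case $\bar b\,\bar c\in \overline{\mathfrak{B}}\cup\{0\}$, so $\overline{\mathfrak{B}}\cup\{0\}$ is a semigroup. Recognizing that this height-jump phenomenon is exactly what the graded multiplication kills is the conceptual crux.

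Finally I would check the filtration condition. Since $A$ is finite-dimensional, $\rad(A)$ is nilpotent, so the positive-degree part $N=\bigoplus_{i\geq 1}\rad^i(A)/\rad^{i+1}(A)$ is a nilpotent ideal of $\mathrm{gr}(A)$ whose quotient is the degree-zero component $A/\rad(A)$, which is semisimple; hence $\rad(\mathrm{gr}(A))=N$. Now $\overline{\mathfrak{B}}\cap \rad(\mathrm{gr}(A))=\{\bar b\mid w(b)\geq 1\}$, and by the argument of the second paragraph applied to each $n\geq 1$ these images form a basis of $N$. Therefore $\overline{\mathfrak{B}}$ is a filtered multiplicative basis of $\mathrm{gr}(A)$, completing the proof.
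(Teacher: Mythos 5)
Your proposal is correct and follows essentially the same route as the paper: both construct the candidate basis from the images of the elements of $\mathfrak{bs}(A)$ of each exact radical height $i$ in $\rad^i(A)/\rad^{i+1}(A)$, verify it is a basis via (F1)/(F2), and handle multiplicativity by observing that a height jump of the product $bc$ beyond $w(b)+w(c)$ makes the graded product vanish. The paper states this last dichotomy slightly more tersely (either $b_ib_j\in\rad^{i+j+1}(A)$, giving zero, or not, giving an element of $\overline{\mathfrak{bs}(A)}_{i+j}$), but the argument is the same.
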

\begin{proof}
Let $\mathfrak{bs}(A)$ be an f.m.b. of $A$.
Put
\[
\mathfrak{bs}(A)_i=\Big(\mathfrak{bs}(A) \cap  \rad^i(A) \Big) \backslash \rad^{i+1}(A), \qquad (i=0,1,\ldots,n-1)
\]
where $n$ is the nilpotency class of $\rad(A)$.
Then, in view of \cite{Bovdi_1}, the images of the elements of $\mathfrak{bs}(A)_i$ in $A/ \rad^{i+1}(A)$ form an $F$-basis $\overline{\mathfrak{bs}(A)}_i$ for the vector space  $\rad^i(A)/ \rad^{i+1}(A)$, where $i=0,\ldots,n-1$. As a consequences, the set
\[
\overline{\mathfrak{bs}(A)}:= \bigcup_{i=0}^{n-1} \overline{\mathfrak{bs}(A)}_i
\]
is an $F$-basis of $\mathrm {gr}(A)$. Of course one has $\rad(\mathrm {gr}(A))=\bigoplus_{i\geq 1} \rad^i(A)/ \rad^{i+1}(A)$.
Now, let $\overline{b_i}=b_i + \rad^{i+1}(A)\in \overline{\mathfrak{bs}(A)}_i$ and $\overline{b_j}=b_j + \rad^{j+1}(A)\in \overline{\mathfrak{bs}(A)}_j$, where $b_i,b_j\in \mathfrak{bs}(A)$.
If $b_ib_j\in \rad^{i+j+1}(A)$ then $\overline{b_i}\overline{b_j}=0$ in $\mathrm {gr}(A)$. Suppose then that $b_ib_j\notin \rad^{i+j+1}(A)$.
Since $\mathfrak{bs}(A)$ is a f.m.b. of $A$ one has $b_ib_j\in  \mathfrak{bs}(A)\cap \rad^{i+j}(A)$, so that  $\overline{b_i}\overline{b_j}\in \overline{\mathfrak{bs}(A)}_{i+j}$.
Therefore $\overline{\mathfrak{bs}(A)}$ is an f.m.b of $\mathrm {gr}(A)$, yielding the claim. \end{proof}

\smallskip
For every prime $p$ we will indicate by $F_p$ the field with $p$ elements. If $g$ and $h$ are two elements of a group then we will denote by $(g,h)$ their group commutator. We recall that a finite $p$-group $G$ is said to be \emph{powerful} if either $p=2$ and $G^\prime \subseteq G^4$ or $p>2$ and $G^\prime \subseteq G^p$. Here $G^\prime$ is the derived subgroup of $G$ and $G^k$ denotes the subgroup of $G$ generated by the elements $g^k$, $g\in G$.

\begin{corollary}\label{C:2}
Let $F G$  be the group algebra of a finite $p$-group $G$ over the  field $F$ of positive characteristic $p$. Denote by $\mathfrak{L}(G)$ the restricted Lie algebra associated with $G$.
Then the following statement hold:
\begin{itemize}
\item[(i)] if  $F G$ possesses an f.m.b. then so does $u(\mathfrak{L}(G)\otimes_{F_p}F)$;
\item[(ii)] if $p>2$ and $G$ is nilpotent of class $2$ then $F G$ does not have any f.m.b.
\end{itemize}
\end{corollary}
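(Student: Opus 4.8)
The plan is to derive both parts from Theorem \ref{T:3} combined with the classical description of the graded algebra of a modular group algebra. Since $G$ is a finite $p$-group and $\mathrm{char}(F)=p$, the radical $\rad(FG)$ coincides with the augmentation ideal $\omega(FG)$, so $\mathrm{gr}(FG)=\bigoplus_{i\ge0}\omega^i(FG)/\omega^{i+1}(FG)$. By Quillen's theorem the associated graded of $F_pG$ along the augmentation filtration is $u(\mathfrak{L}(G))$, where $\mathfrak{L}(G)=\bigoplus_{m\ge1}\mathfrak{D}_m(G)/\mathfrak{D}_{m+1}(G)$ is the restricted Lie algebra attached to the dimension (Jennings) series, with bracket and $p$-map induced by the group commutator and the $p$-th power. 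As $\omega(FG)$ is nilpotent, the radical filtration is compatible with extension of scalars from $F_p$ to $F$ and $u(-)$ commutes with base change, whence
\[
\mathrm{gr}(FG)\cong\mathrm{gr}(F_pG)\otimes_{F_p}F\cong u(\mathfrak{L}(G))\otimes_{F_p}F\cong u\big(\mathfrak{L}(G)\otimes_{F_p}F\big).
\]
For (i), if $FG$ has an f.m.b. then Theorem \ref{T:3} produces one on $\mathrm{gr}(FG)$, and the displayed isomorphism transports it to $u(\mathfrak{L}(G)\otimes_{F_p}F)$.

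For (ii) I would argue by contradiction, assuming $p>2$, $G$ of class $2$ (hence nonabelian), and $FG$ equipped with an f.m.b. $\mathfrak{B}\ni 1$. Because $G$ has class $2$, every triple group commutator is trivial, so the commutator-induced bracket on $\mathfrak{L}(G)$ has vanishing triple products; thus $\mathfrak{L}(G)$, and hence $\mathfrak{L}(G)\otimes_{F_p}F$, is a finite-dimensional $p$-nilpotent restricted Lie algebra of class at most $2$. I then distinguish two cases according to whether the commutator persists in the bottom layer. \emph{If $G'\not\subseteq\mathfrak{D}_3(G)$}, then some commutator has height exactly $2$, the induced pairing $\mathfrak{D}_1/\mathfrak{D}_2\times\mathfrak{D}_1/\mathfrak{D}_2\to\mathfrak{D}_2/\mathfrak{D}_3$ is nonzero, and $\mathfrak{L}(G)\otimes_{F_p}F$ has class exactly $2$. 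By (i) this algebra would carry an f.m.b., contradicting the theorem of \cite{Bovdi_Grishkov_Siciliano} that $u(L)$ has no f.m.b. when $L$ has class $2$ and $p>2$.

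\emph{If instead $G'\subseteq\mathfrak{D}_3(G)$}, the reduction above is unavailable because $\mathfrak{L}(G)$ may then be abelian; here I would argue directly inside $FG$. From $(g,h)\in\mathfrak{D}_3(G)$ we get $(g,h)-1\in\omega^3(FG)$, and therefore
\[
gh-hg=hg\big((g,h)-1\big)\in\omega^3(FG)\qquad(g,h\in G).
\]
Expressing the minimal generators $g_1',\dots,g_d'\in\mathfrak{B}\setminus(\omega^2(FG)\cup\{1\})$ as $F$-linear combinations of the $g_k-1$ modulo $\omega^2(FG)$, this yields $[g_i',g_j']\in\omega^3(FG)$, so $g_i'g_j'\equiv g_j'g_i'\pmod{\omega^3(FG)}$. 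Since $g_i'g_j'$ and $g_j'g_i'$ lie in $\mathfrak{B}\cup\{0\}$ and, being products of bottom-layer generators, are not in $\omega^3(FG)$, property (F2) forces $g_i'g_j'=g_j'g_i'$; as the $g_i'$ generate $FG$, this makes $FG$ commutative and $G$ abelian, a contradiction.

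The step I expect to be the genuine obstacle is the second case, where $\mathfrak{L}(G)$ degenerates to an abelian (indeed direct-sum-of-cyclic) restricted Lie algebra even though $G$ is nonabelian: this is exactly what happens for the modular group of order $p^3$, whose commutator is absorbed into a $p$-th power so that $\mathrm{gr}(FG)$ is commutative and the route through \cite{Bovdi_Grishkov_Siciliano} collapses. The delicate points in salvaging the argument are to confirm that the products $g_i'g_j'$ survive modulo $\omega^3(FG)$, so that (F2) genuinely applies, and to run the argument for an arbitrary minimal generating set rather than a preferred pair; a convenient tool for both is Lemma \ref{L:1} applied to $\mathfrak{L}(G)\otimes_{F_p}F$ through the isomorphism above, which controls the layers $\omega^n(FG)/\omega^{n+1}(FG)$ in terms of heights.
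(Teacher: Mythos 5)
Your proof is correct, and part (i) together with the first case of part (ii) coincide with the paper's argument (Quillen's theorem plus Theorem \ref{T:3}, then Theorem 3 of \cite{Bovdi_Grishkov_Siciliano} when the commutator survives in $\mathfrak{D}_2(G)/\mathfrak{D}_3(G)$). Where you genuinely diverge is the degenerate case $G'\subseteq \mathfrak{D}_3(G)$, i.e.\ when $\mathfrak{L}(G)$ is abelian. The paper disposes of this case in one line: since $p>2$ and $\gamma_3(G)=1$, the dimension subgroup formula gives $\mathfrak{D}_3(G)=G^p$, so $G'\subseteq G^p$ means $G$ is powerful, and Theorem 1 of \cite{Bovdi_2} already says that such a group algebra has no f.m.b. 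You instead argue directly inside $FG$ with property (F2), showing that the bottom-layer basis generators would have to commute exactly, forcing $G$ abelian. Your route is self-contained (it does not need the powerful-group result), and the ``delicate point'' you flag does go through: in this case $[\mathfrak{D}_1,\mathfrak{D}_1]\subseteq\mathfrak{D}_3$, so the degree-two component of $\mathrm{gr}(FG)\cong u(\mathfrak{L}(G)\otimes_{F_p}F)$ is spanned by the commutative PBW monomials of weight $2$, and for $p>2$ a short computation with Lemma \ref{L:1}(ii) shows that the product of two nonzero degree-one elements is nonzero in $\omega^2(FG)/\omega^3(FG)$; hence $g_i'g_j'\notin\omega^3(FG)$ and (F2) applies as you claim. (Note this step really uses $p>2$: for $p=2$ the square of a degree-one element need not survive.) The trade-off is that the paper's citation of \cite{Bovdi_2} is shorter but imports an external theorem, while your argument is longer but elementary; both are valid, and your observation that the modular group of order $p^3$ is exactly the case where the reduction to $\mathrm{gr}(FG)$ collapses correctly identifies why a second argument is unavoidable.
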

\begin{proof}
(i) We first recall the construction of $\mathfrak{L}(G)$ by means of the Zassenhaus-Jennings-Lazard series of $G$.
For every $n\in \mathbb{N}$ the $n$th dimension
subgroup of $G$ is defined by setting
\begin{equation}\label{E:2}
\mathfrak{D}_n(G) = G \cap(1 + \omega^n(FG)) =\prod_{ip^j\geq n}\gamma_i(G)^{p^j},
\end{equation}
where $\omega(FG)$ is the aumentation ideal of $F G$ and the $\gamma_i(G)$ are the terms of the descending central series of $G$.
Then the $F_p$-vector space
\[
{\mathfrak{L}}(G)=\bigoplus_{n\in \mathbb{N}} \mathfrak{D}_n(G)/\mathfrak{D}_{n+1}(G)
\]
has the structure of a restricted Lie algebra with respect to the Lie bracket and $p$-map defined by the following conditions:
\begin{equation}\label{E:3}
\begin{split}
[g\mathfrak{D}_{i+1}(G),h\mathfrak{D}_{i+1}(G)]&=(g,h)\mathfrak{D}_{i+j+1}(G),\\
 (g\mathfrak{D}_{i+1}(G))^p&=g^p\mathfrak{D}_{pi+1}(G).
\end{split}
\end{equation}
(For details we refer the reeder to Chapter VIII of \cite{Passi}.) Now, as $G$ is a $p$-group we clearly have $\rad(F G)=\omega(FG)$ and then, by a well-known theorem of Quillen in \cite{Quillen},  $\mathrm {gr}(F G)$ is isomorphic as an $F$-algebra to
the restricted enveloping algebra $u(\mathfrak{L}(G)\otimes_{F_p}F )$. Consequently, Theorem \ref{T:3} allows to conclude that $u(\mathfrak{L}\otimes_{F_p}F)$ has an f.m.b., as required.

(ii) If $G$ is nilpotent of class $2$ then it is clear that its associated restricted Lie algebra $\mathfrak{L}(G)$ is nilpotent of class at most $2$. Now, if  $\mathfrak{L}(G)$ is abelian, as $p>2$
and $\gamma_3(G)={1}$, from
(\ref{E:2}) and (\ref{E:3}) it follows that $\gamma_2(G) \subseteq  \mathfrak{D}_3(G)=G^{p}$. Therefore $G$ is powerful and so, in view of Theorem 1 of \cite{Bovdi_2}, the group algebra $F G$ cannot have an f.m.b.
On the other hand, if  $\mathfrak{L}(G)$ has nilpotence class $2$, then  by Theorem 3 of \cite{Bovdi_Grishkov_Siciliano} the restricted enveloping algebra $u(\mathfrak{L}(G)\otimes_{F_p}F )$ does not have any filtered multiplicative basis. Hence, from the part (1)  the claim follows at once. \end{proof}

The previous result gives a partial answer to question 5 in \cite{Oberwolfach}. Note also that a possible positive solution of Problem \ref{prob3} combined with Corollary \ref{C:2} would settle completely to question 5 in \cite{Oberwolfach}, as well.
Finally, it is worth remarking that, in general, the converse of Theorem \ref{T:3} is false. For instance, consider the following example:

{\bf Example}.  Let $F$ be a field of positive
characteristic $p$ containing an element $\alpha$
which is not a $p$-th root in $F$. Consider the abelian
restricted Lie algebra
\[
L_\alpha=F x+F y + F z
\]
with $x^{[p]}=\alpha z$,
$y^{[p]}=z$, and $z^{[p]}=0$. Note that $\rad(u(L))$ coincides with the augmentation ideal $\omega(L)$ of $u(L)$.
Consider the restricted Lie algebra
\[
\mathrm {gr}(L)=\bigoplus_{n\in \mathbb{N}} \mathfrak{D}_n(L)/\mathfrak{D}_{n+1}(L) \qquad (n\in \mathbb{N}).
\]
It is easy to see that  $\mathrm {gr}(L)$ is isomorphic to the direct sum of three cyclic restricted Lie algebras and so $u(\mathrm {gr}(L))$
has an f.m.b. (see \cite{Bovdi_Grishkov_Siciliano}, Theorem 1).
Moreover, by Theorem 2.2 of \cite{Usefi} one has $u(\mathrm {gr}(L))\cong \mathrm {gr}(u(L))$, hence $ \mathrm {gr}(u(L))$  has an f.m.b.

On the other hand, for what was showed in \cite{Bovdi_Grishkov_Siciliano} (see  the example on  page 607),  in this case $u(L)$ cannot have any filtered multiplicative basis.

\end{document}